\newtheorem{theorem}{Theorem}
\newtheorem{lemma}[theorem]{Lemma}
\newtheorem{conjecture}[theorem]{Conjecture}
\title{On Vizing's problem for triangle-free graphs}
\author{
Ross J. Kang\thanks{Korteweg--de Vries Institute for Mathematics, University of Amsterdam, Netherlands. Partially supported by Vidi (639.032.614) and Open Competition (OCENW.M20.009) grants of the Dutch Research Council (NWO) as well as  the Gravitation programme NETWORKS (024.002.003) of the Dutch Ministry of Education, Culture and Science (OCW). \protect\href{mailto:ross.kang@gmail.com}{\protect\nolinkurl{ross.kang@gmail.com}}}
\and
Matthieu Rosenfeld\thanks{LIRMM, Univ. Montpellier, CNRS, Montpellier, France. \protect\href{mailto:matthieu.rosenfeld@lirmm.fr}{\protect\nolinkurl{matthieu.rosenfeld@lirmm.fr}}.}
}
\begin{document}

\maketitle

\begin{abstract}
We prove that $\chi(G) \le \lceil (\Delta+1)/2\rceil+1$ for any triangle-free graph $G$ of maximum degree $\Delta$ provided $\Delta \ge 524$.
This gives tangible progress towards an old problem of Vizing, in a form cast by Reed.
We use a method of Hurley and Pirot, which in turn relies on a new counting argument of the second author.
\end{abstract}

%Keywords: graph colouring, triangle-free graphs, Vizing's problem, Reed's conjecture, entropy compression
%MSC2020 codes: 05C15, 05C35

\section{Introduction}\label{sec:intro}

Over half a century ago, Vizing~\cite{Viz68russian} wrote (this is a translation from Russian~\cite{Viz68translation} adapted to modern symbol usage, with $\omega$, $\Delta$, and $\chi$ denoting clique number, maximum degree, and chromatic number, respectively):
\begin{quote}\em
    `If $\Delta(G)$ is the maximum degree of a vertex in a graph $G$, it is clear that $\chi(G)\le \Delta(G) + 1$. Brooks showed in 1941 that $\chi(G)\le \Delta(G)$ whenever $\Delta(G) \ge 3$ and $\omega(G) \le \Delta(G)$. Further investigations could be conducted, taking into account a more exact relation between $\Delta$ and $\omega$. Perhaps one should start with estimates of the chromatic number of a graph without triangles ($\omega = 2$) and with given maximal degree for vertices.'
\end{quote}

This problem has its roots in even older questions and results about triangle-free graphs with arbitrarily high chromatic number~\cite{UnDe54,Zyk49,Myc55,Erd59}.
Deep connections with the off-diagonal Ramsey numbers, especially with the classic work of Ajtai, Koml\'os, Szemer\'edi~\cite{AKS80,AKS81} and Shearer~\cite{She83}, became evident through the seminal works of Johansson~\cite{Joh96+} and, more recently, Molloy~\cite{Mol19}.

\begin{theorem}[Molloy~\cite{Mol19}, cf.~\cite{Joh96+}]\label{thm:molloy}
As $\Delta\to\infty$, it holds that $\chi(G) \le (1+o(1))\Delta/\log\Delta$ for any triangle-free graph $G$ of maximum degree $\Delta$.
\end{theorem}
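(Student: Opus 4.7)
The plan is to prove, for any $\varepsilon>0$, the existence of a threshold $\Delta_0$ such that every triangle-free graph $G$ with $\Delta(G)\ge\Delta_0$ is properly $k$-colourable for $k=\lceil(1+\varepsilon)\Delta/\log\Delta\rceil$. I would adopt an iterated ``nibble'' colouring procedure, maintaining at each round $t$ and each uncoloured vertex $v$ a list $L_t(v)\subseteq[k]$ of still-admissible colours and an uncoloured-degree $d_t(v)$. In each round, every uncoloured vertex activates with a small probability $p$, proposes a uniformly random colour from its list, and retains it only if no neighbour proposes the same colour; $L_{t+1}$ and $d_{t+1}$ are then updated accordingly.

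The invariant I would track is that $|L_t(v)|/d_t(v)$ strictly grows round by round; once this ratio exceeds $1$, a trivial greedy finish completes the colouring. A colour $c\in L_t(v)$ survives in $L_{t+1}(v)$ exactly when no neighbour of $v$ successfully adopts $c$. The triangle-free hypothesis enters decisively here: since $N(v)$ is an independent set, the colours used on $N(v)$ in a given round can be analysed via a hard-core model on the neighbourhood, and a careful counting/entropy argument over independent sets of $N(v)$ shows that the expected loss in $|L_t(v)|$ is smaller by a factor of roughly $\exp(-pd_t(v)/|L_t(v)|)$ than the naive union bound would give. Since $d_t(v)$ simultaneously drops by a factor $\approx e^{-p}$, balancing these two rates produces the required gap $|L_t(v)|/d_t(v)\gtrsim\log\Delta$ after $O(\log\Delta)$ rounds.

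To convert expected behaviour into an actual colouring, I would establish sharp concentration of $|L_{t+1}(v)|$ and $d_{t+1}(v)$ about their means via Talagrand's inequality, and then invoke the Lov\'asz Local Lemma to pick an outcome for which every vertex satisfies the invariant simultaneously. The dependency among ``bad'' events is governed by pairs of vertices at bounded distance, yielding dependency degree polynomial in $\Delta$, so the concentration estimates must deliver error probabilities of $\Delta^{-\omega(1)}$ to beat the dependency.

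The principal obstacle is the counting bound controlling colour survival at $v$: a vertex-by-vertex union bound over $N(v)$ loses a factor of order $\log\Delta$, precisely the quantity we are trying to save. Recovering it requires genuinely exploiting the independence of $N(v)$ globally, via a hard-core analysis of the joint distribution of colours across the whole neighbourhood. This structural feature is what distinguishes the triangle-free regime from the general Brooks-type bound, and it is where the hardest calculations will inevitably live.
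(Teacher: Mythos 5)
First, note that the paper does not prove this statement: it is quoted as Molloy's theorem, and the machinery the paper actually deploys (the Hurley--Pirot counting argument, reproduced in adapted form in Section~\ref{sec:main}) is a one-shot induction showing $|C_L(G)|\ge \ell\,|C_L(G-v)|$ for a uniformly random proper colouring of $G-v$, using triangle-freeness to get conditional independence of the neighbours' colours, plus AM--GM and convexity. Your proposal is a genuinely different route: it is Johansson's iterated semi-random (``nibble'') method, with per-round Talagrand concentration and a Lov\'asz Local Lemma application. That architecture does yield $\chi(G)=O(\Delta/\log\Delta)$ for triangle-free graphs, so as a proof of the qualitative statement it is viable; but it buys a worse constant at the price of far heavier technicalities, whereas the one-shot arguments (Molloy's entropy compression, and the counting argument used in this paper) are both shorter and sharper.

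The genuine gap concerns the leading constant $1+o(1)$, which is the whole content of the theorem as stated. Your key step --- ``the expected loss in $|L_t(v)|$ is smaller by a factor of roughly $\exp(-pd_t(v)/|L_t(v)|)$ than the naive union bound,'' balanced against the degree decay so that $|L_t(v)|/d_t(v)$ grows --- is exactly where the iterated approach has historically failed to close at constant $1$. The obstruction is not the single-round survival probability (in one round the proposals are independent and this computation is routine); it is that in a triangle-free graph two neighbours $u,w$ of $v$ may share almost all of their neighbours, so after several rounds the lists $L_t(u)$ for $u\in N(v)$ become strongly correlated, and the product formula for colour survival at $v$ is no longer justified. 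Kim's analysis handles this only under girth $5$; Johansson's entropy/hard-core workaround for girth $4$, and its refinements (e.g.\ Pettie--Su), lose a multiplicative constant of at least $4$. You would start with $|L_0(v)|/d_0(v)=(1+\varepsilon)/\log\Delta$ and need to reach ratio $1$, so there is only an $\varepsilon$ of slack in the exponent at every round; no published nibble analysis survives on that budget, and all known proofs of the $(1+o(1))$ constant are one-shot arguments. As written, your sketch defers precisely this calculation to ``a careful counting/entropy argument,'' so the crucial step is asserted rather than proved; to repair it you would either need a genuinely new control on the joint evolution of the neighbourhood lists, or you should abandon the iteration and run the hard-core/counting analysis once on a uniformly random proper colouring, as in Section~\ref{sec:main}.
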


\noindent
This result has as a simple corollary an upper bound on the off-diagonal Ramsey number $R(3,k)$ that asymptotically (as $k\to\infty$) matches the longstanding bound of Shearer~\cite{She83}.

It transpires that the link between the task of finding either large independent sets or good colourings of triangle-free graphs and facets of the Lov\'asz local lemma~\cite{ErLo75}, such as the hard-core model (see~\cite{ScSo05}) or the entropy compression method~\cite{Mos09,MoTa10}, is fundamental. It underpins many recent developments in the area, see e.g.~\cite{Ber19,BBCK23,DJKP20,DKPS20+,DKPS24+,HuPi21+,HuPi23,Mol19}.

This work is no exception. To wit, a `counting trick' introduced recently by second author~\cite{Ros20} has shone a new light on the entropy compression method (see~\cite{WaWo22}). This `trick' is the main ingredient of an elegant new proof of Theorem~\ref{thm:molloy} devised by Hurley and Pirot~\cite{HuPi21+,HuPi23}. Here we observe how these recent developments yield tangible progress towards the old question above, in a sense that Vizing may have originally intended.
Our focus is the following conjecture\footnote{One could rightly argue that Vizing has asked for the value of \(
\chi_{\omega=2}(\Delta) := \sup\{\chi(G) : \text{$G$ is a triangle-free graph of maximum degree $\Delta$}\}\) for all $\Delta$, but here we take the more concrete Conjecture~\ref{conj:exact} as our point of reference, which clearly is sufficient challenging.}.

\begin{conjecture}\label{conj:exact}
It holds that $\chi(G) \le \lceil (\Delta+1)/2\rceil+1$ for any triangle-free graph $G$ of maximum degree $\Delta$.
\end{conjecture}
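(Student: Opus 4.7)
Since the target bound $\lceil (\Delta+1)/2\rceil + 1$ is asymptotically far weaker than Molloy's $(1+o(1))\Delta/\log\Delta$ from Theorem~\ref{thm:molloy}, Conjecture~\ref{conj:exact} is automatically true for all $\Delta$ exceeding some (implicit, large) threshold $\Delta_0$. The real content is therefore a \emph{quantitative} bound that brings $\Delta_0$ down as far as possible --- ideally to a value small enough that the remaining finitely many cases succumb to Brooks' theorem (which handles $\Delta \le 4$, where the target already equals $\Delta$) or to ad-hoc analysis. The natural tool, as flagged by the abstract, is the Hurley--Pirot adaptation of Rosenfeld's counting trick, which tends to produce friendly explicit thresholds because it is explicit rather than asymptotic.

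Concretely, I would fix $k = \lceil(\Delta+1)/2\rceil+1$ and, after ordering the vertices $v_1, \ldots, v_n$, define a weighted count $f_i$ of proper $k$-colourings (or proper partial colourings decorated with residual-list information) of $G[\{v_1, \ldots, v_i\}]$. The goal is, by induction on $i$, to establish $f_i \ge \alpha \cdot f_{i-1}$ for some explicit $\alpha > 1$ depending on $\Delta$ and $k$; combined with $f_0 = 1$ this yields $f_n > 0$ and in particular a proper $k$-colouring. The inductive step unwinds to a single numerical inequality, averaging the weighted number of extensions to $v_i$ over colourings $c$ of $G[\{v_1, \ldots, v_{i-1}\}]$, stratified by the colour pattern that $c$ induces on $N(v_i)$. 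Triangle-freeness enters crucially here: because $N(v_i)$ is an independent set, the multiset of colours on $N(v_i)$ is unconstrained by edges within $N(v_i)$, and one can analyse the sum as if each neighbour were coloured independently. This is the source of the roughly factor-of-two improvement over the trivial $k \ge \Delta + 1$ greedy bound.

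The main obstacle I expect is verifying the key inequality in the critical regime $\Delta/k \approx 2$. Typical Rosenfeld-style analyses reduce to showing that a convex function of an occupancy profile on $\{1, \ldots, k\}$ is bounded below in a useful way; for $\Delta/k \ll 1$ linearisation suffices, but at $\Delta/k \approx 2$ second-order terms cannot be discarded, and the delicate choice of weight defining $f_i$ is what converts the $O(1)$ slack in $k = (\Delta+1)/2 + O(1)$ into enough room to close the induction. If the calibration gives out for some moderate $\Delta_1$, the window $5 \le \Delta \le \Delta_1$ will have to be closed by separate means --- plausibly a mix of computer-assisted enumeration for the smallest values and refined structural arguments (processing vertices of large or small degree differently, exploiting sparse neighbourhoods) for the rest. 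I expect the calibration of this single inequality to be the step demanding the most technical work.
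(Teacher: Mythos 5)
First, note that the statement you are proving is stated in the paper as a \emph{conjecture}, and the paper does not prove it: its actual result (Theorem~\ref{thm:main}) is only that the bound holds for $\Delta \ge 524$, and the smallest open case $\Delta = 5$ is singled out separately (Conjecture~\ref{conj:exactsmallest}) precisely because it is not known. For the large-$\Delta$ regime your plan is essentially identical to what the paper does in Section~\ref{sec:main}: an induction establishing $|C_L(G)|/|C_L(G-v)| \ge \ell$ for the count of proper list-colourings, conditioning on the colouring of $G - v - N(v)$ so that (by triangle-freeness) the neighbours of $v$ are coloured conditionally independently, controlling the neighbours with short residual lists via the induction hypothesis, and closing the estimate with AM--GM plus a convexity/Jensen step. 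Your diagnosis that the whole argument reduces to calibrating one numerical inequality in the regime $\Delta/k \approx 2$ is also accurate; the paper's calibration gives $\Delta_0 = 524$.

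The genuine gap is your final paragraph. You propose to close the window $5 \le \Delta \le \Delta_1$ by ``computer-assisted enumeration for the smallest values and refined structural arguments for the rest,'' but this is not a proof sketch — it is a restatement of the open problem. There are infinitely many triangle-free graphs of each maximum degree, so enumeration cannot settle even a single value of $\Delta$, and no structural argument covering, say, $5 \le \Delta \le 523$ is known; indeed the $\Delta = 5$ case is tied to a conjecture of Gr\"unbaum and is explicitly open. So your proposal proves (modulo carrying out the calibration) the same partial result as the paper, but does not prove the stated conjecture, and the missing piece is not a technical detail one could reasonably expect to fill in: it is the entire remaining content of the problem.
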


\noindent
While Theorem~\ref{thm:molloy} is only known to be sharp up to a multiplicative factor $2+o(1)$ (as $\Delta\to\infty$), the bound in Conjecture~\ref{conj:exact} is an `exact relation' in the sense that it is attained by the $4$-regular, $4$-chromatic Chv\'atal graph~\cite{Chv70} (as well as by, for example, odd cycles of length $5$ or more, the Petersen graph, and the Clebsch graph).
The conjecture is a special case ($\omega=2$) of Reed's conjecture~\cite{Ree98}, which asserts that any graph of clique number $\omega$ and maximum degree $\Delta$ has chromatic number at most $\lceil \frac12(\omega+\Delta+1) \rceil$. (The most significant general progress to date on Reed's conjecture is due to Hurley, de Joannis de Verclos and the first author~\cite{HJK22}.)
The bound in Conjecture~\ref{conj:exact} may have earlier provenance.
That is, Kostochka~\cite{Kos77}, motivated by Vizing's problem, proved the very same bound holds under the much stronger condition of girth being at least $4(\Delta+2)\log\Delta$. 
Borodin and Kostochka~\cite{BoKo77}, Catlin~\cite{Cat78a}, and Lawrence~\cite{Law78} independently, proved an upper bound of $\lceil 3(\Delta+1)/4\rceil$.
Kostochka (see~\cite[p.~83]{JeTo95} and also~\cite{Rab13}) proved an upper bound of $2\lceil (\Delta+2)/3 \rceil$.
Brooks' theorem~\cite{Bro41} implies the conjecture for $\Delta\le 4$.
We wish to separately emphasise the current smallest open case.
\begin{conjecture}[$\Delta=5$ of Conjecture~\ref{conj:exact}]\label{conj:exactsmallest}
Every $5$-regular triangle-free graph has chromatic number at most $4$.
\end{conjecture}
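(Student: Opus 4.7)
The plan is to combine the Rosenfeld-style counting that underlies the main theorem with structural reducibility adapted to the very small value $\Delta=5$. Let $G$ be a hypothetical minimum counterexample: $5$-regular, triangle-free, and not $4$-colourable (so $\chi(G)=5$ by Brooks' theorem~\cite{Bro41}), and vertex-critical.

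The first step is to attempt the Rosenfeld/Hurley-Pirot counting in the specific regime $(\Delta,k)=(5,4)$. The method rests on a local product estimate, and a bare application of it leaves essentially no room at these parameters, so the idea is to build in the extra structural savings forced by small-Ramsey constraints. For instance, $R(3,4)=9$ implies that the $10$-vertex set $N(v)\cup N(w)$ of any two adjacent $v,w$ contains an independent set of size $4$, and analogous constraints apply at the second-neighbourhood level when the girth is large. Incorporating these as bonus factors into the counting, I would aim to close the case of girth $\ge 5$, where neighbourhoods of pairs of vertices at distance $2$ overlap in at most one vertex and the local count is most efficient.

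The second step handles the girth-$4$ case by reduction. If $abcd$ is a $4$-cycle, then $\{b,d\}\subseteq N(a)\cap N(c)$; the plan is to identify $a$ and $c$, obtaining a triangle-free graph (a new triangle at the merged vertex $v^*$ would need two adjacent common neighbours of $a$ and $c$, which would already form a triangle in $G$), then $4$-colour the smaller graph by minimality and lift. The obstacle is that $v^*$ has degree $|N(a)\cup N(c)|$, which can be as large as $8$, so the maximum degree may exceed $5$ and minimality does not directly apply. One therefore needs additional local surgery — deleting carefully chosen vertices or rewiring edges inside $N(a)\cup N(c)$ — to bring $v^*$ back to degree $\le 5$ while preserving triangle-freeness, followed by a lifting step for a $4$-colouring through these edits. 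An alternative is a vertex-splitting move that replaces $\{a,c\}$ by a suitably rewired pair of lower-degree vertices.

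Third, the finite residual of small girth-$\ge 5$ graphs (at least $26$ vertices by the Moore bound) can in principle be dispatched by SAT-based enumeration, given that the Clebsch graph itself is $4$-colourable. The main obstacle, by a comfortable margin, is the reducibility step in the girth-$4$ case: producing a local move that is simultaneously triangle-free-preserving, degree-preserving, and strictly shrinks the graph is exactly the kind of operation that has blocked Vizing-Reed-type progress at small $\omega$ for decades, and any serious attempt would likely need a genuinely new idea — perhaps a direct, not-via-minimum-counterexample argument that bypasses reducibility altogether.
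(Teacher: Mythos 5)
This statement is not a theorem of the paper but its explicitly flagged \emph{open} case: the authors prove Conjecture~\ref{conj:exact} only for $\Delta\ge 524$ and single out $\Delta=5$ precisely because no proof is known, adding that they ``do not expect anything below around $100$ without significant new ideas.'' So there is no proof in the paper to compare against, and what you have written is a research programme rather than a proof --- as you yourself concede in your final paragraph.

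The concrete gaps are these. First, the Rosenfeld--Hurley--Pirot counting is quantitatively hopeless at $(\Delta,k)=(5,4)$: Property~$\mathbf{(P)}$ requires $t<\ell<k(\delta)<\delta$ and an exponential loss factor $(1-\frac{1}{t+1})^{(t+1)(\delta-t\delta/\ell)/(k(\delta)-t\delta/\ell)}$ that only beats $\ell$ when $k(\delta)$ is a constant fraction of $\delta$ \emph{and} $\delta$ is in the hundreds; the Ramsey ``bonus'' from $R(3,4)=9$ is a bounded additive saving and cannot close a gap of this order. Second, your girth-$4$ reduction does not produce a graph in the class you are inducting over: identifying the antipodal vertices $a,c$ of a $4$-cycle can create a vertex of degree up to $8$, and the ``local surgery'' needed to repair this while preserving triangle-freeness and $4$-colour liftability is exactly the missing content --- you name it as the main obstacle but supply no candidate move. (Note also that a minimum counterexample to the $5$-regular statement is not preserved by any vertex-deleting or contracting operation, so you would in any case need to induct over the broader class of triangle-free graphs of maximum degree $5$, i.e.\ the full $\Delta=5$ case of Conjecture~\ref{conj:exact}.) Third, the girth-$\ge 5$ case does not reduce to a ``finite residual'': there are infinitely many $5$-regular triangle-free graphs of girth at least $5$, and nothing in your first step yields an upper bound on the order of the unresolved ones, so SAT enumeration has no finite target. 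The statement remains open, and your proposal correctly identifies, but does not overcome, the obstruction.
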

\noindent
This aligns with one of the few remaining cases of an old and largely refuted conjecture of Gr\"unbaum~\cite{Gru70}; he actually conjectured the negation of this.

Obviously Theorem~\ref{thm:molloy} implies the existence of some $\Delta_0$ such that the bound asserted in Conjecture~\ref{conj:exact} holds for all $\Delta\ge \Delta_0$. But to the best of our knowledge, no deliberate effort was made to explicitly bound $\Delta_0$. This is likely due to the technicality of earlier methods used to prove Theorem~\ref{thm:molloy} and its precursors. The situation has changed
and the result we highlight in this note is as follows.

\begin{theorem}\label{thm:main}
The bound asserted in Conjecture~\ref{conj:exact} holds for all $\Delta \ge 524$.
\end{theorem}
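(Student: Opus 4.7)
The plan is to prove the stronger list-colouring assertion that the list chromatic number of $G$ is at most $k := \lceil(\Delta+1)/2\rceil + 1$ whenever $G$ is triangle-free with $\Delta(G) = \Delta \geq 524$, via the counting method of Rosenfeld~\cite{Ros20} in the form devised by Hurley and Pirot~\cite{HuPi21+,HuPi23} for Theorem~\ref{thm:molloy}. Fix a list assignment $L$ with $|L(v)| \geq k$ for each $v$, enumerate $V(G) = \{v_1, \ldots, v_n\}$ in an arbitrary order, and write $U_i$ for the number of proper $L$-colourings of the induced subgraph $G[\{v_1, \ldots, v_i\}]$. I would aim to establish a uniform multiplicative lower bound $U_i \geq c \cdot U_{i-1}$ for some absolute $c > 0$, so that $U_n \geq c^n > 0$ and the required colouring exists.

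The inductive step rests on the following dichotomy. Among the $k U_{i-1}$ naive one-colour extensions of a proper $L$-colouring of $G[\{v_1,\ldots,v_{i-1}\}]$ to $v_i$, a \emph{bad} extension is one that assigns $v_i$ a colour already present on $N(v_i) \cap \{v_1, \ldots, v_{i-1}\}$. The crucial consequence of triangle-freeness is that $N(v_i)$ is independent, so the colour classes partitioning $N(v_i)$ in any proper colouring may be arbitrarily large and, in particular, frequently have size two or more. Following the Hurley--Pirot strategy, I would bound the number of bad extensions by injectively encoding each one as a proper colouring of a smaller induced subgraph together with a small amount of auxiliary data recording the conflict pattern at $v_i$. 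With $k$ of order $\Delta/2$, the colour classes on $N(v_i)$ have average size roughly two, and this \emph{doubling up} is exactly the source of savings that allows $k$ to dip below $\Delta + 1$. Combining the resulting bound on bad extensions with $U_i \leq k U_{i-1}$ yields an inequality of the shape $U_i \geq (k - S)\, U_{i-1}$ for an explicit savings term $S = S(k, \Delta)$, and the induction carries through as soon as $S < k$.

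The main technical step, and the principal obstacle, is to verify the resulting savings inequality uniformly for every $\Delta \geq 524$. I expect it to reduce to a single one-variable inequality in the ratio $k/\Delta$, optimised by a near-hard-core weighting of bad extensions in the spirit of~\cite{HuPi23}, with the explicit threshold $524$ emerging as the smallest integer at which the inequality can be certified by elementary (but numerical) analysis. The bound is surely not tight; closing the remaining range $5 \leq \Delta \leq 523$, and in particular Conjecture~\ref{conj:exactsmallest}, presumably demands structurally different methods rather than refinements of this counting scheme.
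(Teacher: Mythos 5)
Your high-level framework coincides with the paper's --- the Rosenfeld counting argument in its Hurley--Pirot formulation, establishing a uniform multiplicative lower bound on the number of proper $L$-colourings as vertices are added --- but the proposal stops precisely where the proof begins, and the one concrete mechanism you sketch for the key step is not the one that delivers the result. You propose to bound the bad extensions at $v$ by ``injectively encoding'' each one as a colouring of a smaller graph plus auxiliary conflict data. What actually works is a probabilistic computation: take $\mathbf{c}$ uniform on $C_L(G-v)$ and condition on its restriction to $G_0=G-v-N(v)$. Triangle-freeness makes $N(v)$ independent, so conditionally the colours of the neighbours $u$ are independent and uniform on their residual lists $L_{c_0}(u)$; hence each $j\in L(v)$ survives with probability $\prod_{u\ni j}\bigl(1-1/|L_{c_0}(u)|\bigr)$, and $\mathbb{E}[|L_{\mathbf{c}}(v)|]$ is bounded below via AM--GM and a convexity/Jensen step. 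Crucially, to make that product non-negligible one must show that few neighbours have small residual lists, and this is extracted from the induction hypothesis itself: $\mathbb{P}(|L_{\mathbf{c}}(u)|\le t)\le t\,|C_L(G-v-u)|/|C_L(G-v)|\le t/\ell$. This self-referential use of the invariant is the heart of the method and is entirely absent from your plan; without it, a union bound over neighbours only excludes $\deg(v)$ colours and forces $k>\Delta$. Note also that your induction over prefixes of one fixed ordering is too weak to close, since the step above invokes the ratio bound for $G-v-u$, which is not a prefix; the statement must be proved for all induced subgraphs and all vertices.

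The second gap is that you defer the quantitative verification (``I expect it to reduce to \dots'', ``the explicit threshold $524$ emerging \dots''), but that verification \emph{is} the theorem. The paper isolates it as an explicit inequality (Property $\mathbf{(P)}$): with $t=1$ and $\ell=8$ one needs
\[
\Bigl(k(\delta)-\tfrac{\delta}{8}\Bigr)\Bigl(\tfrac12\Bigr)^{2(\delta-\delta/8)/(k(\delta)-\delta/8)}\ \ge\ 8,
\]
which for $k(\delta)=\lceil(\delta+1)/2\rceil+1$ follows from $(3\delta/8+1)\,2^{-14/3}\ge 8.01$ once $\delta\ge 540$, together with a finite check for $524\le\delta<540$. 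None of this derivation or verification appears in the proposal, so as written it is a correct identification of the strategy rather than a proof. (The paper in fact proves a stronger local statement, with list sizes $\lceil(\deg(v)+1)/2\rceil+1$ under a minimum degree hypothesis, from which Theorem~\ref{thm:main} follows.)
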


\noindent
We prove this by adapting the method of Hurley and Pirot to this version of the triangle-free colouring problem. We note that one straightforwardly derives an estimate of around $30000$ for $\Delta_0$ from Hurley and Pirot's main result~\cite[Thm.~2.10]{HuPi23}.
In showing Theorem~\ref{thm:main}, we wanted to indicate how a satisfying resolution to Vizing's triangle-free problem, as interpreted through Conjecture~\ref{conj:exact} and Reed's conjecture, is already within a stone's throw.
We would be interested to see better estimates on $\Delta_0$, although we do not expect anything below around $100$ without significant new ideas.

Later in the note we consider some related problems for bipartite or $C_4$-free graphs.

\subsection{Notation and preliminaries}

We write $\deg(v)$ for the degree of $v\in V(G)$.

We will establish Theorem~\ref{thm:main} in a more general context than chromatic number, not only in terms of list colouring, but also in an even more refined context, that of local colouring (as in~\cite{DJKP20,DKPS20+}, for instance). For this, we need some extra terminology.
Our notation is mostly standard; we collect it here for convenience.

Given a graph $G$, a list-assignment of $G$ is a mapping $L: V(G)\to 2^{{\mathbb N}}$.
Given a positive integer function on the vertices $k: V(G) \to\mathbb{N}$, we say that $L$ is a $k$-list-assignment if $|L(v)| \ge k(v)$ for all $v\in V(G)$.
Given a colouring $c: V(G) \to\mathbb{N}$ of $G$, we say that $c$ is an $L$-colouring if $c(v)\in L(v)$ for all $v\in V(G)$.
We will be preoccupied with proper colourings $c$, that is, with $c(v)\ne c(v')$ for any edge $vv'\in E(G)$, and sometimes we will omit the term proper if there is no possible confusion.

Recall that the list chromatic number $\chi_\ell(G)$ of $G$ is the least integer $k$ such that $G$ admits an $L$-colouring for any $k$-list-assignment $L$ of $G$.

We also work with partial colourings $c: S\to \mathbb{N}$ for some $S\subsetneq V(G)$.
Given a (partial) proper $L$-colouring $c$ of $G$ and $v\in V(G)$, we write $L_c(v)$ for the set of colours from $L(v)$ that are still available to $v$, that is, $L_c(v) = L(v) \setminus \{c(u) : u\in N(v)\}$.
For any subgraph $H\subseteq G$, we write $C_L(H)$ for the set of proper $L$-colourings of $H$.
In Section~\ref{sec:main}, it will be essential to have good control on how the order of $C_L$ evolves.

%In Section~\ref{sec:bipartite}, we will make simple use of the following symmetric form of the Lov\'asz Local Lemma~\cite{ErLo75,She85}.
%
%\begin{lemma}
%	Consider a set $\mathcal{E}$ of (bad) events such that for each $A\in \mathcal{E}$
%	\begin{enumerate}
%		\item $\Pr(A) \le p < 1$, and
%		\item $A$ is mutually independent of a set of all but at most $d$ of the other events.
%	\end{enumerate}
%	If $epd\le1$, then with positive probability none of the events in $\mathcal{E}$ occur.
%\end{lemma}
%
In the next section, we need the following simple technical result that will allow us to apply Jensen's inequality in the proof of our main result.

\begin{lemma}\label{convexity}
 For all $c>0$, $a>0$ and $b>1$, the function $z\mapsto \min\{0,cz(1-\frac{1}{b})^{ab/z}\}$ is convex. % over $(0,+\infty)$.
\end{lemma}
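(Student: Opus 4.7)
The plan is to reduce convexity of $\min\{0, g(z)\}$, where $g(z) := cz(1-1/b)^{ab/z}$, to a direct second-derivative computation of $g$ on its natural positive domain, together with the observation that $g$ is in fact everywhere positive there. Setting $\lambda := ab\ln(1-1/b)$, the hypothesis $b > 1$ gives $0 < 1-1/b < 1$ and hence $\lambda < 0$. This lets me rewrite the inner expression in the cleaner exponential form $g(z) = cz\,e^{\lambda/z}$, which is easier to differentiate than the original power form.

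The first step is to differentiate twice. By the product and chain rules,
$$g'(z) \;=\; c\,e^{\lambda/z}\!\left(1 - \tfrac{\lambda}{z}\right).$$
The crucial step is a second differentiation, in which the two contributions coming from differentiating $ce^{\lambda/z}$ and from differentiating $(1-\lambda/z)$ cancel to leave a single clean monomial,
$$g''(z) \;=\; \frac{c\,\lambda^{2}}{z^{3}}\,e^{\lambda/z}.$$
For $z>0$ each factor on the right is positive, so $g$ is strictly convex on $(0,\infty)$.

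To conclude, I observe that on $(0,\infty)$ each of $c$, $z$, and $(1-1/b)^{ab/z}$ is strictly positive, so $g(z)>0$ throughout this domain. Consequently $\min\{0, g(z)\}\equiv 0$ on $(0,\infty)$, which is (trivially) convex, and the lemma follows. The only real obstacle is the algebraic cancellation in the second differentiation: correctly tracking the chain-rule factor $-\lambda/z^{2}$ and confirming that the two cross-terms combine to $c\lambda^{2}/z^{3}$ rather than something sign-ambiguous is where any slip would most likely occur.
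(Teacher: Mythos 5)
Your two derivative computations are correct and coincide exactly with the paper's (which records the same second derivative $c\lambda^{2}e^{\lambda/z}/z^{3}$, with $\lambda=ab\ln(1-\tfrac1b)$). The problem lies in your final step, and it points at a real defect in the statement itself. You correctly note that $g(z)=cz(1-\tfrac1b)^{ab/z}>0$ for $z>0$, so that $\min\{0,g\}\equiv0$ on $(0,\infty)$; but then what you have proved is that the zero function is convex, which is vacuous and cannot be what the lemma is for. Where the lemma is invoked, Jensen's inequality is applied with $z=k(v)-|B|$, which can be non-positive, so the relevant domain is not just $(0,\infty)$. And on $(-\infty,0)$ one has $g<0$, hence $\min\{0,g\}=g$ there, and your own formula gives $g''(z)=c\lambda^{2}e^{\lambda/z}/z^{3}<0$ for $z<0$: the literal $\min$-function is concave on the negative half-line (and tends to $-\infty$ as $z\uparrow0$), so the statement as written is false on any interval meeting $(-\infty,0)$.

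What is actually needed (and what the paper's proof is implicitly about, with $\min$ a slip for $\max$) is that the function equal to $g$ on $(0,\infty)$ and to $0$ on $(-\infty,0]$ is convex on all of $\mathbb{R}$. For that, $g''>0$ on $(0,\infty)$ is not enough: one must also check the gluing at $0$, namely that $g(z)=cze^{\lambda/z}\to0$ and $g'(z)=ce^{\lambda/z}(1-\lambda/z)\to0$ as $z\downarrow0$ (both hold because $\lambda<0$ makes $e^{\lambda/z}$ decay faster than any power of $z$), so that the derivative of the glued function is non-decreasing across $0$. This boundary analysis --- the point of the paper's remark that the second derivative ``vanishes as $z\downarrow0$'' --- is the substantive content of the lemma, and it is absent from your write-up.
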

\begin{proof}
  This is easily verified by differentiating twice. For $z\in (0,+\infty)$, the second derivative is
  $$z\mapsto c\frac{\left(1-\frac{1}{b}\right)^{ab/z}\left(ab\ln\left(1-\frac{1}{b}\right)\right)^2}{z^3},$$
  which is positive and vanishes as $z\downarrow 0$.
\end{proof}

\section{Main bound}\label{sec:main}

With the above terminology, we can fully state and prove our main result.

\begin{theorem}\label{thm:mainlocal}
Define $k : x \mapsto \lceil (x+1)/2\rceil+1$.
Let $G$ be a triangle-free graph of minimum degree at least $524$.
Then $G$ admits a proper $L$-colouring for any $(k\circ \deg)$-list-assignment $L$ of $G$.
\end{theorem}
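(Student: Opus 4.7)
The plan is to bound $|C_L(G)|$ from below and show it is at least $1$, so that some proper $L$-colouring exists. I would order the vertices of $G$ arbitrarily as $v_1,\dots,v_n$, set $G_i = G[\{v_1,\dots,v_i\}]$, and track the ratios $|C_L(G_i)|/|C_L(G_{i-1})|$ using the Rosenfeld-style counting trick as streamlined by Hurley and Pirot. Concretely, for each $i$, if $U_i = N_{G_i}(v_i)$ is the set of already-placed neighbours of $v_i$ and we write $\ell_i := |L(v_i)| \ge k(\deg(v_i))$, then by counting pairs $(c,x)$ with $c\in C_L(G_{i-1})$ and $x\in L(v_i)\setminus c(U_i)$, one obtains
\[
|C_L(G_i)| \;=\; \sum_{c \in C_L(G_{i-1})} |L_c(v_i)| \;\ge\; \ell_i \cdot |C_L(G_{i-1})| - \sum_{u\in U_i} |C_L(G_{i-1}\cup\{uv_i\text{-like constraint}\})|,
\]
and a Rosenfeld-type comparison (using that $v_i$'s neighbourhood in $G_i$ is independent, since $G$ is triangle-free) gives an inequality of the shape
\[
|C_L(G_i)| \;\ge\; \ell_i \paren{1 - \tfrac{1}{b}}^{|U_i|\cdot b/\ell_i} \cdot |C_L(G_{i-1})|
\]
for an appropriate choice of parameter $b>1$ that will be optimised at the end.

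Second, I would iterate this bound to obtain $\log |C_L(G)| \ge \sum_{i=1}^n \log\bigl(\ell_i (1-1/b)^{|U_i|\,b/\ell_i}\bigr)$. Reordering the summation by grouping each edge $uv$ according to the later of its endpoints shows that $\sum_i |U_i| = |E(G)| = \tfrac12 \sum_v \deg(v)$. To convert the per-vertex contributions (which depend on both $\deg(v)$ and the split of $\deg(v)$ into "before" and "after" neighbours) into a clean per-vertex bound, I would apply Jensen's inequality using Lemma~\ref{convexity}, which is precisely tuned to make $z \mapsto \min\{0, cz(1-1/b)^{ab/z}\}$ convex in $z$. The result is an inequality
\[
\log |C_L(G)| \;\ge\; \sum_{v\in V(G)} \Phi\bigl(\deg(v),\ell_v,b\bigr)
\]
for an explicit function $\Phi$, and it suffices to prove that $\Phi(d, k(d), b) \ge 0$ for every $d \ge 524$ at a single choice of $b$.

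Third, I would reduce the problem to a single-variable numerical inequality: with $\ell = k(d) = \lceil(d+1)/2\rceil + 1 \ge d/2+1$, we need $\ell\,(1-1/b)^{d\,b/\ell} \ge 1$, i.e.\ $\log \ell \ge (db/\ell)\log\bigl(b/(b-1)\bigr)$. Taking $b$ of moderate size (something like $b=\ell$ or a small absolute constant times $\ell$) and using $\log(b/(b-1)) = 1/b + O(1/b^2)$, the right-hand side is approximately $d/\ell \approx 2$, so one needs $\log\ell \gtrsim 2$, which just barely survives at $d = 524$ after carefully collecting lower-order terms.

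The main obstacle is the tightness of the threshold: the argument above, in the simplest form, does \emph{not} comfortably accommodate a list of size roughly $d/2$. A list size of $d/\log d$ is the natural regime for the Hurley--Pirot counting, and here the extra factor of $\log d$ of headroom is absent, so the additive $+1$ in $k(d) = \lceil(d+1)/2\rceil + 1$ must be exploited carefully and $b$ must be optimised delicately. The arithmetic of verifying that $d = 524$ is the first integer for which the chosen $b$ yields $\Phi \ge 0$, and that the sign genuinely fails for $d \le 523$, will be the most technical part of the proof.
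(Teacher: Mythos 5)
Your overall framework --- Rosenfeld-style counting of colourings, exploiting triangle-freeness to make the already-coloured neighbours of $v_i$ conditionally independent, and invoking Lemma~\ref{convexity} for a Jensen step --- is the right family of ideas, but there is a genuine gap at the heart of the argument: the inequality $|C_L(G_i)|\ge \ell_i(1-1/b)^{|U_i|b/\ell_i}|C_L(G_{i-1})|$ is asserted ``of the shape'' without any mechanism for lower-bounding the residual lists $|L_c(u)|$ of the already-coloured neighbours $u$, which is what the parameter $b$ must secretly encode. A uniformly random $c\in C_L(G_{i-1})$ can in principle leave some neighbour $u$ with $|L_c(u)|=1$, killing the factor $1-1/|L_c(u)|$. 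The paper's fix is to strengthen the induction: it proves $|C_L(H)|\ge \ell\,|C_L(H-v)|$ for a fixed constant ratio $\ell=8$, for \emph{every} induced subgraph $H$ and \emph{every} vertex $v$ of $H$, not just along one fixed ordering. This ratio is exactly what yields $\mathbb{P}\bigl[|L_{\mathbf c}(u)|\le t\bigr]\le t/\ell$, via an injection of such colourings into $C_L(G-v-u)$; the few neighbours with depleted lists are then sacrificed (their colours are deleted from $L(v)$ outright, forming a random set $B$ with $\mathbb{E}|B|\le t\deg(v)/\ell$), and each remaining neighbour contributes a factor at least $1-\tfrac{1}{t+1}=\tfrac12$ with the threshold $t=1$. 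Jensen's inequality is applied to the random variable $|B|$ inside the inductive step, not globally over the vertex ordering.

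Consequently your target inequality is also the wrong one: to close the induction one must show the per-vertex quantity is at least $\ell=8$, not at least $1$. The paper's Property~$\mathbf{(P)}$ reads $\bigl(k(\delta)-\tfrac{t\delta}{\ell}\bigr)\bigl(1-\tfrac1{t+1}\bigr)^{(t+1)(\delta-t\delta/\ell)/(k(\delta)-t\delta/\ell)}\ge\ell$, which with $t=1$, $\ell=8$ becomes roughly $\tfrac{3\delta}{8}\cdot 2^{-14/3}\ge 8$ and is verified analytically for $\delta\ge 540$, the range $524\le\delta<540$ being checked by direct computation. Your closing heuristic ``$\log\ell\gtrsim 2$, which just barely survives at $d=524$'' is a symptom of the gap: $\log(d/2)\approx 5.6$ at $d=524$, and the inequality you propose would already hold for $d$ around $15$, so it cannot be the inequality that determines the threshold. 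Finally, $524$ is not claimed to be sharp for the method or the statement; nothing ``genuinely fails'' at $d=523$ --- it is simply where this particular choice of parameters stops being verified.
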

\noindent
Note that this statement directly implies Theorem~\ref{thm:main} even with the stronger parameter of list chromatic number $\chi_\ell(G)$ of $G$.
Similarly, Theorem~\ref{thm:mainlocal} but without the minimum degree condition would be a much stronger form of Conjecture~\ref{conj:exact}.
Theorem~\ref{thm:mainlocal} will follow immediately by combining the following two lemmata.

We remark that we have made a choice of $k$ here to align with Conjecture~\ref{conj:exact}, but we could very well have chosen $x \mapsto \lceil 3(x+1)/4\rceil$ or $x \mapsto 2\lceil (x+2)/3 \rceil$, say, and obtained ---under a similar minimum degree condition, and through suitable though unilluminating technical adjustments--- the same conclusion.

For any positive integers $\Delta_0$, $\ell$, $t$ and a non-decreasing function $k: \mathbb{N}\rightarrow\mathbb{N}$, we say that $(\Delta_0,\ell,t, k)$ has Property~$\mathbf{(P)}$ if for all integers $\delta\ge\Delta_0$,
\begin{enumerate}
  \item $0<t< \ell< k(\delta)<\delta$, and
  \item $\left(k(\delta)-\frac{t\delta}{\ell}\right)\left(1-\frac{1}{t+1}\right)^{\frac{(t+1)(\delta-t\delta/\ell)}{ k(\delta)-t\delta/\ell}}\ge \ell\,.$
\end{enumerate}

\begin{lemma}
Let  $\Delta_0$, $\ell$ and $t$ be positive integers and $k: \mathbb{N}\to\mathbb{N}$ be a non-decreasing function such that $(\Delta_0,\ell, k,t)$ has Property~$\mathbf{(P)}$.
Let  $G$ be a triangle-free graph with minimum degree
at least $\Delta_0$, and let $L$ be a $(k\circ \deg)$-list-assignment of $G$. Then, for any $v\in V(G)$,
\begin{align*}|C_L(G)|/|C_L(G-v)|\ge \ell\,.\end{align*}
\end{lemma}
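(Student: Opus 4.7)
Fix $v\in V(G)$ with $d=\deg(v)\ge\Delta_0$, and let $N=N(v)$; since $G$ is triangle-free, $N$ is an independent set. Without loss of generality shrink each list so that $|L(w)|=k(\deg(w))$ for every $w\in V(G)$, as this can only decrease the ratio $R:=|C_L(G)|/|C_L(G-v)|$. Writing $\phi$ for a uniformly random sample from $C_L(G-v)$ (nonempty, else the claim is vacuous), we have
\[
R \;=\; \mathbb{E}_\phi\bigl[|L_\phi(v)|\bigr] \;=\; \sum_{c\in L(v)}\Pr_\phi[c\in L_\phi(v)].
\]
For each $c\in L(v)$ let $\mu_c:=\mathbb{E}_\phi\bigl|\{u\in N:\phi(u)=c\}\bigr|$; by linearity, $\sum_{c\in L(v)}\mu_c\le|N|=d$, since each $u\in N$ contributes to at most one summand (its own colour).

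The heart of the argument is the Rosenfeld-style estimate
\[
\Pr_\phi[c\in L_\phi(v)] \;\ge\; \Bigl(1-\tfrac{1}{t+1}\Bigr)^{(t+1)\mu_c} \qquad\text{for every } c\in L(v).
\]
I would establish this by iteratively restricting the list of each $u\in N$ with $c\in L(u)$ and tracking the resulting colouring counts via a counting/swap argument in the spirit of Rosenfeld and Hurley--Pirot. The triangle-free structure (ensuring that $N$ is independent, so that list restrictions at distinct neighbours do not interfere with one another) and the list-size condition $|L(u)|\ge k(\deg(u))>t$ supplied by Property~$\mathbf{(P)}$ are the essential ingredients, and together should deliver a cumulative multiplicative loss of at most $1-1/(t+1)$ per unit of expected mass placed on $u$. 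This is the step I expect to be the most delicate: the pointwise bound $\Pr_\phi[\phi(u)=c]\le 1/(t+1)$ can fail when $|L(u)|-\deg_{G-v}(u)$ is small, so one needs an amortised analysis to produce the exponent $(t+1)\mu_c$ rather than the cruder $|\{u\in N:c\in L(u)\}|$ (which would only give $R\ge k(d)(1-1/(t+1))^d$, far too weak).

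Granted the estimate, let $S\subseteq L(v)$ consist of the $K:=k(d)-td/\ell$ colours with smallest $\mu_c$. The average of $\mu_c$ across $S$ is at most the overall average $d/k(d)$, so $\sum_{c\in S}\mu_c\le Kd/k(d)$. Clause~1 of Property~$\mathbf{(P)}$ gives $k(d)\le d$, which rearranges to $(t+1)Kd/k(d)\le (t+1)d(\ell-t)/\ell=:D$. Applying Jensen's inequality to the convex function of $\mu$ supplied by Lemma~\ref{convexity} (restricted to the $K$ values $\mu_c$, $c\in S$),
\begin{align*}
R &\;\ge\; \sum_{c\in S}\Bigl(1-\tfrac{1}{t+1}\Bigr)^{(t+1)\mu_c} \\
&\;\ge\; K\Bigl(1-\tfrac{1}{t+1}\Bigr)^{(t+1)(\sum_{c\in S}\mu_c)/K} \\
&\;\ge\; K\Bigl(1-\tfrac{1}{t+1}\Bigr)^{D/K} \;\ge\; \ell,
\end{align*}
where the last inequality is clause~2 of Property~$\mathbf{(P)}$.
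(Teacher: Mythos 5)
There is a genuine gap, and it sits exactly where you flag it: the ``heart'' estimate $\Pr_\phi[c\in L_\phi(v)]\ge(1-\tfrac{1}{t+1})^{(t+1)\mu_c}$ is asserted but not proved, and as stated it is false without further input. Conditioning on a colouring $c_0$ of $G-v-N(v)$, the neighbours' colours are independent and uniform on the sets $L_{c_0}(u)$, so the conditional survival probability of $c$ is $\prod_u(1-p_{u,c})$ with $p_{u,c}=1/|L_{c_0}(u)|$ for those $u$ with $c\in L_{c_0}(u)$. The inequality $1-p\ge(1-\tfrac{1}{t+1})^{(t+1)p}$ only holds for $p\le 1/(t+1)$; if some neighbour has $|L_{c_0}(u)|\le t$ (in the extreme, a neighbour forced to colour $c$), the conditional left-hand side can be $0$ while your right-hand side stays bounded away from $0$, and no amount of averaging over $c_0$ recovers the claim unless you can show such bad events are rare. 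The paper's mechanism for this is precisely what your sketch omits: the lemma is proved by induction on $|V(G)|$, and the induction hypothesis $|C_L(G-v)|\ge\ell\,|C_L(G-v-u)|$, combined with the counting observation that at most $t$ colourings of $G-v$ restrict to any given colouring of $G-v-u$ when $|L_c(u)|\le t$, yields $\Pr[|L_\phi(u)|\le t]\le t/\ell$. The paper then conditions on the colours of these bad neighbours, discards the (at most $\mathbf{t}_v$ in expectation $\le t\deg(v)/\ell$) colours they occupy, and runs the AM--GM/Jensen computation only over the good neighbours; this is the actual source of the $-t\delta/\ell$ term in Property~$\mathbf{(P)}$. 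In your write-up that term is spent instead on an arbitrary truncation to the $K$ colours of smallest $\mu_c$, which does nothing to neutralise the bad neighbours. So the missing ``amortised analysis'' is not a routine technicality: it is the inductive counting argument that constitutes the core of the proof.

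Two smaller points. First, your ``without loss of generality shrink each list'' step is unjustified for vertices $w\ne v$: shrinking $L(w)$ changes the distribution of $\phi$ and it is not clear the ratio $|C_L(G)|/|C_L(G-v)|$ can only decrease; fortunately this reduction is not needed (for $L(v)$ larger than $k(d)$ the average of $\mu_c$ only drops). Second, $K=k(d)-td/\ell$ need not be an integer, so ``the $K$ colours with smallest $\mu_c$'' needs the same $\min\{0,\cdot\}$/convexity device the paper uses (Lemma~\ref{convexity}) rather than a literal selection of colours. Your closing arithmetic does correctly reduce to clause~2 of Property~$\mathbf{(P)}$, so once the per-colour estimate is repaired along the paper's inductive lines the conclusion would follow.
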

For brevity, we allow ourselves to write $k$ instead of $k\circ \deg$.

\begin{proof}
The proof is by induction on the order of the vertex set $V(G)$. The case $V(G)=\{v\}$ is trivial. We suppose that our claim holds for all strict subgraphs of $G$ and our goal is to prove that this holds for $G$.
We consider what happens if we choose a colouring $\mathbf{c}$ uniformly at random from $C_L(G-v)$.

Our induction hypothesis implies that for all $u\in V(G)-v$,
\begin{equation*}
|C_L(G-v)|\ge \ell |C_L(G-v-u)|\,.
\end{equation*}

Fix a vertex $u\in N(v)$. Let $F=\{c\in C_L(G-v): |L_c(u)|\le t\}$. The restriction of any coloring $c\in F$ to $G-v-u$ is in $C_L(G-v-u)$. Moreover, any coloring from $C_L(G-v-u)$ is the restriction of at most $t$ such colourings, hence, $|F|\le t|C_L(G-v-u)|$. We deduce that
$$\mathbb{P}(|L_\mathbf{c}(u)|\le t)= \frac{|F|}{|C_L(G-v)|}\le\frac{t|C_L(G-v-u)|}{\ell|C_L(G-v-u)|}= \frac{t}{\ell}\,.$$

Let $\mathbf{t}_v=\left|\{u\in N(v): |L_\mathbf{c}(u)|\le t\}\right|$. Then by linearity of expectation 
\begin{equation}\label{expectedsizeofB}
\mathbb{E}[\mathbf{t}_v]\le \frac{t\deg(v)}{\ell}\,.
\end{equation}

Let $G_0= G-v-N(v)$, and consider the distribution of $\mathbf{c}$ conditioned on $\mathbf{c}|_{G_0}=c_0$ for some $c_0 \in C_L(G_0)$. Note that for all $u\in N(v)$, $L_\mathbf{c}(u)= L_{c_0}(u)$. Conditioned on $\mathbf{c}|_{G_0}=c_0$, the colours of the neighbours of $v$ are conditionally independent and uniformly chosen from the respective sets $L_{c_0}(u)$.

The problem is reduced to the following elementary statement. We are given sets $L_{\mathbf{c}}(u)$ of colours, for each $u\in N(v)$. We are also given random variables $\mathbf{X}_u\in L_{\mathbf{c}}(u)$, for each $u\in N(v)$, each chosen independently and uniformly from the respective set.
Writing $\mathbf{X}= |L(v)-\{\mathbf{X}_u: u\in N(v)\}|$ (note that it is possible that $\mathbf{X}_u\not\in L(v)$), we have that
 $\frac{|C_L(G)|}{|C_L(G-v)|} =\mathbb{E}[|L_\mathbf{c}(v)|]= \mathbb{E}[\mathbf{X}] $. The problem is thus reduced to showing that  $\mathbb{E}[\mathbf{X}]\ge \ell$.

In order to bound $\mathbb{E}[\mathbf{X}]$, we condition on the values of $\mathbf{X}_u$ for each $u$ such that $|L_{\mathbf{c}}(u)|\le t$, and we write $B$ for the (random) set of those values. 
For each $j\in L(v)$, we write the indicator variable $\mathbf{Y}_j= [j\in \mathbf{X}]$, hence $\mathbf{X}\ge\sum\limits_{j\in L(v)\setminus B}\mathbb{E}[\mathbf{Y}_j]$.
 By linearity of expectation, the conditional expectation of $\mathbf{X}$ given $c_0$ satisfies
\begin{align*}
\mathbb{E}[\mathbf{X} \mid \mathbf{c}|_{G_0}=c_0]
&\ge\sum_{j\in L(v)\setminus B}\prod_{\substack{u \in N(v)\\L_{c_0}(u)\ni j\\ |L_{c_0}(u)|>t}}\left(1-\frac{1}{|L_{c_0}(u)|}\right)\\
&\ge(k(v)-|B|)\left(\prod_{j\in L(v)\setminus B}\prod_{\substack{u \in N(v)\\L_{c_0}(u)\ni j\\ |L_{c_0}(u)|>t}}\left(1-\frac{1}{|L_{c_0}(u)|}\right)\right)^{1/(k(v)-|B|)}\\
&\ge(k(v)-|B|)\left(\prod_{\substack{u\in N(v)\\ |L_{c_0}(u)|>t}}\prod_{j\in L_{c_0}(u)\cap L(v)\setminus B}\left(1-\frac{1}{| L_{c_0}(u)|}\right)\right)^{1/(k(v)-|B|)}\\
&\ge(k(v)-|B|)\left(\prod_{\substack{u\in N(v)\\ |L_{c_0}(u)|>t}}\left(1-\frac{1}{| L_{c_0}(u)|}\right)^{| L_{c_0}(u)|}\right)^{1/(k(v)-|B|)}\\
&\ge(k(v)-|B|)\left(1-\frac{1}{t+1}\right)^{\frac{(t+1)(\deg(v)-|B|)}{k(v)-|B|}}\\
&=(k(v)-|B|)\left(1-\frac{1}{t+1}\right)^{(t+1)}\left(1-\frac{1}{t+1}\right)^{\frac{(t+1)(\deg(v)-k(v))}{k(v)-|B|}}\,,
\end{align*}
where the second inequality is an application of the AM--GM inequality.

By Lemma~\ref{convexity} (with $c=(1-1/(t+1))^{t+1}$, $a= \deg(v)-k(v)$ and $b=t+1$), the minimum of zero and this last quantity is a convex function of $k(v)-|B|$. 
Thus, we can use Jensen's inequality to average over all values of $c_0$ as follows:
\begin{align*}
  \mathbb{E}[\mathbf{X}]&\ge\mathbb{E}\left[\min\left\{0,(k(v)-|B|)\left(1-\frac{1}{t+1}\right)^{\frac{(t+1)(\deg(v)-|B|)}{k(v)-|B|}}\right\}\right]\\
  &\ge\min\left\{0,\mathbb{E}\left[k(v)-|B|\right]\left(1-\frac{1}{t+1}\right)^{\frac{(t+1)(\deg(v)-k(v)+\mathbb{E}\left[k(v)-|B|\right])}{\mathbb{E}\left[k(v)-|B|\right]}}\right\}.
\end{align*}

By definition $|B|\le\mathbf{t}_v$. Hence, by equation~\eqref{expectedsizeofB}, we have
$$\mathbb{E}\left[k(v)-|B|\right]\ge k(v)-\frac{t\deg(v)}{\ell}\,.$$ 

This and the previous series of inequalities imply
 \[\mathbb{E}[\mathbf{X}]\ge\left(k(v)-\frac{t\deg(v)}{\ell}\right)\left(1-\frac{1}{t+1}\right)^{\frac{(t+1)(\deg(v)-t\deg(v)/\ell)}{ k(v)-t\deg(v)/\ell}}\,,\]
which by our theorem hypothesis implies $\mathbb{E}(\mathbf{X})\ge \ell$ as desired.   
\end{proof}

\begin{lemma}
Let $\Delta_0=524$, $\ell=8$, $t=1$ and $k$ be the function such that for all $\delta\in\mathbb{N}$, $k=\lceil\frac{\delta+1}{2}\rceil+1$. The quadruplet $(\Delta_0,\ell,t, k)$ has Property~$\mathbf{(P)}$.
\end{lemma}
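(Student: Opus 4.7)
The plan is to verify the two clauses of Property~$\mathbf{(P)}$ directly for the given values $\Delta_0=524$, $\ell=8$, $t=1$, and $k(\delta)=\lceil(\delta+1)/2\rceil+1$, for every integer $\delta\ge 524$. Clause~1 is essentially free: clearly $0<1<8$; for any $\delta\ge 524$ one has $k(\delta)\ge \lceil 525/2\rceil+1=264>8=\ell$, and $k(\delta)\le(\delta+2)/2+1=\delta/2+2<\delta$ since $\delta>4$. All of the content therefore lies in Clause~2.

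Substituting $t=1$ and $\ell=8$, Clause~2 reads
\[
A\cdot 2^{-2B/A}\ge 8, \qquad\text{where } A:=k(\delta)-\tfrac{\delta}{8},\ B:=\tfrac{7\delta}{8}.
\]
I would split on the parity of $\delta$. Writing $\delta=2m$ in the even case and $\delta=2m+1$ in the odd case (both giving $k(\delta)=m+2$), a short computation expresses the left-hand side in the compact form
\[
\Phi(u) \;=\; \frac{u}{8}\cdot 2^{-14/3}\cdot 2^{c/u},
\]
with $u:=8A$ linear in $m$ and a parity-dependent constant $c$---namely $c=56$ in the odd case and $c=224/3$ in the even case---obtained by expanding $2B/A=14/3-c/u$.

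A one-line derivative computation gives $\Phi'(u)=(\Phi(u)/u)(1-c\ln 2/u)$, which is strictly positive once $u>c\ln 2$. Since $c\ln 2<52$ for either parity while $u\ge 6\cdot 262+15=1587$ throughout the range $\delta\ge 524$, $\Phi$ is strictly increasing in $m$ within each parity class. Consequently it suffices to verify the two smallest cases, $\delta=524$ and $\delta=525$.

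The main obstacle is that these base cases are extremely tight. A direct numerical estimate gives $A\cdot 2^{-2B/A}\approx 8.08$ at $\delta=524$ (with $A=198.5$, $B=458.5$) but only $\approx 8.003$ at $\delta=525$ (with $A=198.375$, $B=459.375$). With a slack of order $10^{-3}$, a rigorous check must employ precise numerical bounds---for instance a rational enclosure of $\ln 2$---rather than loose arithmetic; this tightness is presumably what dictates the specific threshold $\Delta_0=524$. Once the two base cases are confirmed, monotonicity of $\Phi$ yields Property~$\mathbf{(P)}$ for all $\delta\ge 524$.
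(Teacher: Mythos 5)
Your proof is correct, but it takes a genuinely different route from the paper's. The paper proceeds by two uniform estimates: $k(\delta)-\delta/8\ge 3\delta/8+1$ and $\frac{\delta-\delta/8}{k(\delta)-\delta/8}\le \frac{7}{3}$, which give the clean lower bound $\left(\frac{3\delta}{8}+1\right)2^{-14/3}$; this exceeds $8$ only once $\delta\ge 540$, so the paper then checks the sixteen cases $524\le\delta<540$ by direct computation. You instead keep the expression exact, split by parity so that $k(\delta)=m+2$ in both cases, rewrite the left-hand side as $\Phi(u)=\frac{u}{8}2^{-14/3}2^{c/u}$ with $u=8(k(\delta)-\delta/8)$ linear in $m$ and $c\in\{56,224/3\}$, and prove via $\Phi'(u)=(\Phi(u)/u)(1-c\ln 2/u)>0$ (valid since $u\ge 1587\gg c\ln 2$) that $\Phi$ is increasing within each parity class --- reducing everything to the two base cases $\delta=524$ and $\delta=525$. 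Your calculations check out (I verified $c=224/3$ for $\delta=2m$ and $c=56$ for $\delta=2m+1$, and the values $\approx 8.08$ and $\approx 8.00$ at the base cases). What your approach buys is a conceptual explanation of the threshold: the binding constraint is the odd case $\delta=525$, where the slack is only about $4\times 10^{-3}$, and monotonicity shows nothing worse happens later; it also shrinks the computer-checked portion from sixteen cases to two. What it costs is that those two cases are extremely tight, so the final verification genuinely requires careful rational or interval arithmetic rather than back-of-the-envelope estimates --- you flag this honestly but do not carry it out, which leaves your write-up at the same level of delegated computation as the paper's (the paper likewise appeals to ``a computational mathematical engine'' for its residual cases). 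To make your version fully self-contained you would want to exhibit explicit rational bounds certifying $\Phi>8$ at $u=1587$ and $u=1588$.
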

\begin{proof}
By definition of $k$, $t$ and $\ell$, we have for all $\delta>0$ that
\begin{itemize}
  \item $k(\delta)-t\delta/\ell\ge \frac{3\delta}{8}+1$, and
  \item $\frac{\delta-t\delta/\ell}{k(\delta)-t\delta/\ell}\le\frac{1-t/\ell}{3/8}=\frac{7}{3}$.
\end{itemize}
We can now verify that 
\begin{align*}
  \left(k(\delta)-\frac{t\delta}{\ell}\right)\left(1-\frac{1}{t+1}\right)^{\frac{(t+1)(\delta-t\delta/\ell)}{ k(\delta)-t\delta/\ell}}
  \ge\left(\frac{3\delta}{8}+1\right)2^{-14/3}
  \ge8.01>\ell\,,
\end{align*}
where we use the fact that $\delta\ge540$ for the second inequality.

For each of the remaining cases, with $524\le\delta<540$, one can (for instance with the help of a computational mathematical engine) verify the inequality by direct computation. This completes the establishment of Property~$\mathbf{(P)}$.
\end{proof}

\section{The bipartite case}\label{sec:bipartite}

Conjecture~\ref{conj:exact} is quite trivial in the special case of bipartite $G$. 
But in consideration of Theorem~\ref{thm:mainlocal}, it is perhaps insightful to point out how the following result holds as a consequence of polynomial methods.
We thank Zden\v{e}k Dvo\v{r}\'{a}k and Ronen Wdowinski for pointing us to this implication.

\begin{theorem}[\cite{AlTa92}]\label{thm:zdenekobs}
For $k : x \mapsto \lceil x/2\rceil+1$,
any bipartite graph $G$ admits a proper $L$-colouring for any $(k\circ \deg)$-list-assignment $L$ of $G$.
\end{theorem}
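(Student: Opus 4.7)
The plan is to use the polynomial method of Alon and Tarsi. Recall the Alon--Tarsi theorem: for any orientation $D$ of a graph $G$, if the number $EE(D)$ of even Eulerian sub-digraphs of $D$ differs from the number $OE(D)$ of odd ones, then $G$ admits a proper $L$-colouring for every list assignment with $|L(v)| \ge d_D^+(v) + 1$ at every $v$. (Here an Eulerian sub-digraph is a sub-digraph in which the in-degree equals the out-degree at every vertex, and its parity is the parity of its edge count.) The strategy is therefore to find an orientation $D$ that satisfies the parity hypothesis and also satisfies $d_D^+(v) \le \lceil \deg(v)/2\rceil$ at every vertex $v$.

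For the orientation, I would use the standard Eulerian construction: add an auxiliary vertex joined to each odd-degree vertex of $G$ so that the resulting multigraph has all-even degrees, take an Eulerian circuit, orient each original edge of $G$ in the direction the circuit traverses it, and then discard the auxiliary vertex. At every vertex of $G$ the in- and out-degree then differ by at most $1$, so $d_D^+(v) \le \lceil \deg(v)/2\rceil$ as required. This step is routine.

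To check the parity hypothesis for bipartite $G$, observe that the undirected edge set underlying any Eulerian sub-digraph has all-even degrees and hence decomposes into edge-disjoint cycles. Since $G$ is bipartite, every such cycle has even length, so the total edge count of every Eulerian sub-digraph is even. Therefore $OE(D) = 0$, while $EE(D) \ge 1$ (witnessed by the empty sub-digraph), and the Alon--Tarsi hypothesis holds. Combining this with the orientation from the previous step yields an $L$-colouring whenever $|L(v)| \ge \lceil \deg(v)/2\rceil + 1 = k(\deg(v))$, as claimed. There is no real obstacle: the three ingredients (the Alon--Tarsi criterion, the bipartite parity observation, and the Eulerian orientation) are all packaged in \cite{AlTa92}.
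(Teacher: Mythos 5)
Your proposal is correct and matches the paper's proof essentially verbatim: the same Eulerian orientation via an auxiliary vertex attached to the odd-degree vertices, the same observation that bipartiteness forces every (Eulerian sub-digraph, hence every) cycle to be even so the Alon--Tarsi parity condition holds, and the same appeal to the Alon--Tarsi theorem. You simply spell out the even/odd Eulerian sub-digraph count in more detail than the paper does.
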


\begin{proof}
There is an orientation of $G$ in which the outdegree is at most $ \lceil \deg(v)/2\rceil$ for each $v\in V(G)$. For example, add a vertex $v'$ adjacent to all vertices of odd degree, take any Eulerian orientation of the resulting supergraph, and then remove $v'$. Moreover, the orientation contains no odd directed cycle, since $G$ is bipartite. The result then follows by a result of Alon and Tarsi~\cite[Thm.~1.1]{AlTa92}.
\end{proof}
\noindent
Since $K_{3,3}$ has list chromatic number $3$, this result is best possible.
Moreover, the conclusion of Theorem~\ref{thm:zdenekobs} fails if we take $k$ to be {\em any} sublinear function, due to a simply defined tree construction with a very large span of degrees~\cite[Prop.~11]{DJKP20}.
Theorem~\ref{thm:zdenekobs} is related to a vexing conjecture of Alon and Krivelevich~\cite{AlKr98}, which posits a bound on the list chromatic number logarithmic in $\Delta$.
We have implicitly conjectured (shortly after stating Theorem~\ref{thm:mainlocal}) an essentially stronger form of Theorem~\ref{thm:zdenekobs} for $G$ being triangle-free.

Next, as a means of comparison, we focus on an assertion that is in fact weaker than Theorem~\ref{thm:zdenekobs}. For this, we introduce some extra terminology. Given a bipartite graph $G=(V{=}A\cup B,E)$ with parts $A$, $B$ and positive integers $k_A$, $k_B$, a mapping $L: A\to \binom{{\mathbb Z}^+}{k_A},B\to \binom{{\mathbb Z}^+}{k_B}$ is called a {\em $(k_A,k_B)$-list-assignment} of $G$. We say $G$ is {\em $(k_A,k_B)$-choosable} if there is guaranteed a proper $L$-colouring of $G$ for any such $L$. It is natural to permit maximum degree constraints $\Delta_A$ and $\Delta_B$ that vary per part, and then ask for bounds on $k_A$ and $k_B$ (in terms of $\Delta_A$ and $\Delta_B$) that guarantee $(k_A,k_B)$-choosability. This problem was introduced in~\cite{ACK21}, but in the current context we highlight the following corollary of Theorem~\ref{thm:zdenekobs}.

\begin{theorem}\label{mainBipConj}
  Let $\Delta_A,\Delta_B$ be positive integers.
  Then any bipartite graph $G=(V{=}A \cup B, E)$ with parts $A$ and $B$ having maximum degrees at most $\Delta_A$ and $\Delta_B$, respectively, is $(\lceil\Delta_A/2\rceil+1,\lceil\Delta_B/2\rceil+1)$-choosable.
\end{theorem}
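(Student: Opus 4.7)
The plan is to derive Theorem~\ref{mainBipConj} directly from Theorem~\ref{thm:zdenekobs} by a size-monotonicity observation on the list-assignment. Concretely, set $k: x \mapsto \lceil x/2 \rceil + 1$, which is the non-decreasing function appearing in Theorem~\ref{thm:zdenekobs}, and write $k_A = \lceil \Delta_A/2 \rceil + 1 = k(\Delta_A)$ and $k_B = \lceil \Delta_B/2 \rceil + 1 = k(\Delta_B)$.

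Next, I would take any $(k_A,k_B)$-list-assignment $L$ of $G = (V = A \cup B, E)$ and argue that $L$ is, in particular, a $(k \circ \deg)$-list-assignment of $G$ in the sense defined in Section~\ref{sec:intro}. For every $v \in A$ we have $\deg(v) \le \Delta_A$, so monotonicity of $k$ gives
\[
|L(v)| = k_A = k(\Delta_A) \ge k(\deg(v)),
\]
and the symmetric inequality $|L(v)| \ge k(\deg(v))$ holds for every $v \in B$. Thus $L$ satisfies the hypothesis of Theorem~\ref{thm:zdenekobs}, which then yields a proper $L$-colouring of $G$. Since $L$ was arbitrary, $G$ is $(k_A,k_B)$-choosable.

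There is essentially no obstacle in this step; the entire content of the statement is absorbed into Theorem~\ref{thm:zdenekobs}, whose proof in turn rests on the Alon--Tarsi polynomial method together with the existence of an orientation of any bipartite graph with outdegrees at most $\lceil \deg(v)/2\rceil$ (obtained via an Eulerian orientation after adding a dummy vertex to fix parities). The only mild care is to match conventions between the general $(k\circ\deg)$-list-assignment of the earlier sections (which requires $|L(v)| \ge k(\deg(v))$) and the $(k_A,k_B)$-list-assignment of Section~\ref{sec:bipartite} (which pins $|L(v)|$ to $k_A$ or $k_B$); as shown above, the former inequality is implied by the part-wise maximum degree bound, so no further work is needed.
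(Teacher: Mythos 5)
Your proposal is correct and matches the paper's intent exactly: the paper presents Theorem~\ref{mainBipConj} as a corollary of Theorem~\ref{thm:zdenekobs}, and the monotonicity argument you spell out ($|L(v)| = k(\Delta_A) \ge k(\deg(v))$ for $v\in A$, and symmetrically for $B$) is precisely the omitted deduction. Nothing further is needed.
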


\noindent
%Of course, Theorem~\ref{thm:zdenekobs} implies this result directly.
We note that modest adaptation of the method used in Section~\ref{sec:main} establishes Theorem~\ref{mainBipConj} for all but at most $274000$ pairs $(\Delta_A,\Delta_B)$. Our aim in this section is to illustrate for comparison how another method, given in~\cite{ACK21}, establishes Theorem~\ref{mainBipConj} for all but at most $27000$ pairs $(\Delta_A,\Delta_B)$. %(Moreover with additional checks by computer all but at most $6837$ cases can be established in this same manner.) 
We straightforwardly apply the following result.

\begin{lemma}[{\cite[Thm.~4]{ACK21}}]\label{mainBip}
Let the positive integers $\Delta_A$, $\Delta_B$, $k_A$, $k_B$, with $k_A\le \Delta_A$ and $k_B\le \Delta_B$, satisfy one of the following conditions:
\begin{align}
\label{itm:transversals}
&k_B \ge (ek_A\Delta_B)^{1/k_A}\Delta_A; \ \text{ or }\\
\label{itm:coupon}
&e(\Delta_A(\Delta_B-1)+1) \left(1-(1-1/k_B)^{\Delta_A\min\left\{1,k_B/k_A\right\}}\right)^{k_A} \le 1.
\end{align}
Then any bipartite graph $G=(V{=}A\cup B,E)$ with parts $A$ and $B$ having maximum degrees at most $\Delta_A$  and $\Delta_B$, respectively, is $(k_A,k_B)$-choosable.
\end{lemma}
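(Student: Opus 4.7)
The plan is to prove each sufficient condition separately by the following common pattern: colour one side of the bipartite graph uniformly at random, expose the residual list at each vertex on the other side, and then invoke the Lov\'asz Local Lemma (LLL) to show that a valid completion exists with positive probability. Both hypotheses are recognisable as symmetric LLL inequalities of the form $e(D+1)p\le 1$: in~\eqref{itm:coupon}, the dependence degree $D=\Delta_A(\Delta_B-1)$ exactly matches the number of $A$-vertices at distance two from a given $a\in A$, while in~\eqref{itm:transversals} the analogous quantity with the roles of $A$ and $B$ swapped appears in the dependence graph.

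For condition~\eqref{itm:coupon}, I would colour $B$ at random in two stages: independently for each $b\in B$, first pick a uniformly random sublist $L'(b)\subseteq L(b)$ of size $\min\{k_A,k_B\}$, then pick a uniform $c(b)\in L'(b)$. For each $a\in A$ let $E_a$ be the event that every colour of $L(a)$ is used by some neighbour of $a$. The two-stage sampling is tuned so that for each fixed $c\in L(a)$ and $b\in N(a)$, $\Pr[c(b)=c]\le \min\{1,k_B/k_A\}/k_B$; hence $c$ survives with probability at least $(1-1/k_B)^{\Delta_A\min\{1,k_B/k_A\}}$. The $k_A$-th power in the claimed bound on $\Pr[E_a]$ would then come from an FKG-type correlation inequality applied across the $k_A$ colours of $L(a)$ (equivalently, by revealing the $c(b)$'s one at a time and noting each one blocks at most one colour of $L(a)$). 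The symmetric LLL then delivers $(k_A,k_B)$-choosability.

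For condition~\eqref{itm:transversals}, I would swap the roles: colour $A$ uniformly at random from its lists, and for each $b\in B$ let $E_b$ be the event $L(b)\subseteq \{c(a):a\in N(b)\}$. Bounding $\Pr[E_b]$ amounts to counting the ``covering'' patterns of the $k_B$-set $L(b)$ by random colours drawn from $N(b)$ (of size at most $\Delta_B$), each drawn from a list of size at least $k_A$; a routine surjection estimate produces a bound polynomial in $k_A\Delta_B/k_B$ raised to a power of order $k_A$, which the arithmetic hypothesis $k_B\ge(ek_A\Delta_B)^{1/k_A}\Delta_A$ is precisely calibrated to convert, together with dependence degree at most $\Delta_B(\Delta_A-1)$, into the LLL inequality.

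The main obstacle, by some margin, is the probability estimate underlying~\eqref{itm:coupon}: both the $\min\{1,k_B/k_A\}$ factor in the exponent and the full $k_A$-th power require nontrivial care. Without the sublist step one would only obtain $(1-1/k_B)^{\Delta_A}$, which is wasteful when $k_B\gg k_A$, so the two-stage randomisation is essential to get the right trade-off. The $k_A$-th power is likewise not immediate from independence, since the $k_A$ ``blocking'' events at $a$ share the same $c(b)$'s; the cleanest route is an FKG or coupling argument that exploits the monotone structure of these events. Everything else---the LLL bookkeeping, and the arithmetic verification that the hypothesis implies $e(D+1)p\le 1$---is routine.
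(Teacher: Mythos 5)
First, a point of comparison: the paper does not prove Lemma~\ref{mainBip} at all --- it is imported verbatim from \cite[Thm.~4]{ACK21}, with only the remark that it ``relies on two direct applications of the Lov\'asz Local Lemma''. So your proposal has to be judged against the source. Your overall template (randomly colour one side, attach a bad event to each vertex of the other side, apply the symmetric LLL with a distance-two dependency count) is the right family of argument, but two of your key steps do not work as described.

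For~\eqref{itm:coupon}, the two-stage subsampling cannot deliver $\mathbb{P}[c(b)=c]\le\min\{1,k_B/k_A\}/k_B$. Uniformly choosing a $\min\{k_A,k_B\}$-subset of $L(b)$ and then a uniform colour from it gives $\mathbb{P}[c(b)=c]=\frac{\min\{k_A,k_B\}}{k_B}\cdot\frac{1}{\min\{k_A,k_B\}}=\frac{1}{k_B}$ for every $c\in L(b)$; and when $k_B<k_A$ \emph{no} scheme can make every colour of $L(b)$ have probability at most $1/k_A$, since the $k_B$ probabilities sum to $1>k_B/k_A$. This is precisely the regime the present paper uses ($\Delta_A\ge\Delta_B$, hence $k_A\ge k_B$). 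The factor $\min\{1,k_B/k_A\}$ in the exponent arises instead from averaging: setting $d_c=|\{b\in N(a):c\in L(b)\}|$, one has $\sum_{c\in L(a)}d_c=\sum_{b\in N(a)}|L(a)\cap L(b)|\le\Delta_A\min\{k_A,k_B\}$, and one combines the product bound $\mathbb{P}[E_a]\le\prod_{c}\bigl(1-(1-1/k_B)^{d_c}\bigr)$ with the concavity of $d\mapsto\log\bigl(1-(1-1/k_B)^{d}\bigr)$ to replace each $d_c$ by its average $\Delta_A\min\{1,k_B/k_A\}$. The product bound itself is a negative-correlation statement (negative association of balls-in-bins occupancies, \`a la Dubhashi--Ranjan); FKG/Harris gives the \emph{opposite} inequality for increasing events, so it is the wrong tool, although you correctly sense that this step needs justification. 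For~\eqref{itm:transversals} you have the sides reversed: rewriting the condition as $ek_A\Delta_B(\Delta_A/k_B)^{k_A}\le1$ shows the failure probability has the form $(\Delta_A/k_B)^{k_A}$, i.e.\ a bad event at a vertex of $A$ whose $\le\Delta_A$ neighbours draw from lists of size $k_B$ --- so the random colouring is again on $B$. Your version (colour $A$, events on $B$) would require a condition of the shape $k_A\ge(e\Delta_A\Delta_B)^{1/k_B}\Delta_B$, which is not implied by~\eqref{itm:transversals}. Even after correcting the sides, the naive dependency degree $\Delta_A(\Delta_B-1)$ yields the stronger requirement $e\Delta_A\Delta_B(\Delta_A/k_B)^{k_A}\le1$ rather than the stated one with $k_A\Delta_B$ in place of $\Delta_A\Delta_B$, so an additional idea (a sharper dependency count or a lopsided variant) is still needed; ``a routine surjection estimate'' does not close this.
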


This result relies on two direct applications of the Lov\'asz Local Lemma.
We remark that of these two conditions, Condition~\ref{itm:coupon}, based on a ``coupon collector'' intuition, is the one of greater importance for our purposes here.

\begin{theorem}\label{thm:asym}
The conclusion of Theorem~\ref{mainBipConj} holds true under additionally either of the following conditions (or, symmetrically, the analogous conditions with $\Delta_A$ and $\Delta_B$ exchanged):
\begin{enumerate}
\item\label{itm:one}
$\Delta_A\ge165$ and $\Delta_A\ge\Delta_B\ge56$; or
\item\label{itm:other}
$\Delta_A\le55$ and $\Delta_B\ge153$.
\end{enumerate}
\end{theorem}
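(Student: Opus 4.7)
The plan is to apply Lemma~\ref{mainBip} with the natural choices $k_A = \lceil \Delta_A/2 \rceil + 1$ and $k_B = \lceil \Delta_B/2 \rceil + 1$. By the symmetry flagged in the statement, it suffices to handle the stated side: I would establish part~(\ref{itm:one}) via Condition~\ref{itm:coupon} and part~(\ref{itm:other}) via Condition~\ref{itm:transversals}.

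For part~(\ref{itm:one}), the hypothesis $\Delta_B \le \Delta_A$ forces $k_B \le k_A$, so $\min\{1, k_B/k_A\} = k_B/k_A$ and the exponent appearing in Condition~\ref{itm:coupon} becomes $\Delta_A k_B / k_A$. Using the standard estimate $(1-1/k_B)^{k_B} \le 1/e$, one obtains $(1-1/k_B)^{\Delta_A k_B/k_A} \le e^{-\Delta_A/k_A}$, so the parenthetical factor is at most $(1 - e^{-\Delta_A/k_A})^{k_A}$. Since $\Delta_A/k_A$ is strictly less than but tends to $2$, this is essentially $(1 - e^{-2})^{k_A}$, which decays exponentially in $\Delta_A$ and therefore easily swamps the polynomial prefactor $e(\Delta_A(\Delta_B-1)+1) \le e\Delta_A^2$. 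The cut-off $\Delta_A \ge 165$, together with $\Delta_B \ge 56$ (which controls the slack in $(1-1/k_B)^{k_B} \le 1/e$ for small $k_B$), should suffice via explicit but routine monotonicity estimates.

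For part~(\ref{itm:other}), Condition~\ref{itm:transversals} is equivalent to $(k_B/\Delta_A)^{k_A} \ge e k_A \Delta_B$. Since $\Delta_A \le 55$, the exponent $k_A$ lies in $\{2, \ldots, 29\}$, while $k_B \ge 77$ is comparatively large, so $k_B/\Delta_A$ is bounded below by a constant exceeding $1$. Taking logarithms reduces the condition to $k_A \log(k_B/\Delta_A) \ge \log(e k_A \Delta_B)$; for fixed $\Delta_A$, both sides are monotone in $\Delta_B$ and the inequality holds for all $\Delta_B$ above some threshold. I would verify by a short case analysis over $\Delta_A \in \{1, \ldots, 55\}$ that the boundary value $\Delta_B = 153$ already suffices, and then extend to all larger $\Delta_B$ by monotonicity.

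The main obstacle, as I see it, is isolating sharp enough constants in part~(\ref{itm:one}): the exponential-beats-polynomial mechanism is clean asymptotically, but pinning down the explicit thresholds $\Delta_A \ge 165$ and $\Delta_B \ge 56$ requires careful accounting for the slack in $(1-1/k_B)^{k_B} \le 1/e$ and, very likely, a direct computational check on boundary pairs $(\Delta_A, \Delta_B)$. The analogous tightness issue in part~(\ref{itm:other}) appears at $(\Delta_A, \Delta_B) = (55, 153)$, where $(k_B/\Delta_A)^{k_A}$ only narrowly exceeds $e k_A \Delta_B$, so this boundary case must be verified explicitly.
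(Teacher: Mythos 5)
Your overall strategy coincides with the paper's: take $k_A=\lceil\Delta_A/2\rceil+1$ and $k_B=\lceil\Delta_B/2\rceil+1$, verify Condition~\eqref{itm:coupon} of Lemma~\ref{mainBip} for part~(\ref{itm:one}) and Condition~\eqref{itm:transversals} for part~(\ref{itm:other}), the latter by the same reduction to $\Delta_B\ge\left(ek_A(2\Delta_A)^{k_A}\right)^{1/(k_A-1)}$ followed by a finite check over $\Delta_A\le 55$, with the tight case indeed at $\Delta_A=55$. Two caveats on part~(\ref{itm:other}): Lemma~\ref{mainBip} requires $k_A\le\Delta_A$, which fails for $\Delta_A=1$, so that case must be dispatched separately (it is trivial); and monotonicity in $\Delta_B$ needs $k_A\ge 2$, which does hold throughout.

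In part~(\ref{itm:one}), however, your key inequality points the wrong way. To make the left-hand side of~\eqref{itm:coupon} small you need an \emph{upper} bound on $\bigl(1-(1-1/k_B)^{\Delta_A k_B/k_A}\bigr)^{k_A}$, hence a \emph{lower} bound on $(1-1/k_B)^{\Delta_A k_B/k_A}$. The estimate $(1-1/k_B)^{k_B}\le 1/e$ gives $(1-1/k_B)^{\Delta_A k_B/k_A}\le e^{-\Delta_A/k_A}$ and therefore $1-(1-1/k_B)^{\Delta_A k_B/k_A}\ge 1-e^{-\Delta_A/k_A}$ --- a lower bound on the parenthetical factor, not the claimed upper bound. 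The step must be replaced by an inequality in the opposite direction, such as $(1-1/n)^{n-1}\ge 1/e$, with the resulting loss quantified; this is precisely where $\Delta_B\ge 56$ is consumed. The paper does this by bounding the exponent $\Delta_A k_B/k_A\le\Delta_B+3$ and $k_B\ge(\Delta_B+2)/2$, observing that $\Delta_B\mapsto 1-\left(1-\tfrac{2}{\Delta_B+2}\right)^{\Delta_B+3}$ is decreasing, and evaluating at $\Delta_B=56$ to obtain the constant $0.874$ (compare your heuristic $1-e^{-2}\approx 0.865$), after which $e\cdot 165^2\cdot 0.874^{83.5}<1$ closes the argument. You have identified the right mechanism and even flagged the slack in $(1-1/k_B)^{k_B}\le 1/e$ as the delicate point, but as written the deduction fails and must be rebuilt along these lines.
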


\begin{proof}
Throughout the proof, we let $k_A=\lceil\Delta_A/2\rceil+1$ and $k_B=\lceil\Delta_B/2\rceil+1$.

We begin with proving the assertion under Condition~\ref{itm:one}.
For this it suffices to check that~\eqref{itm:coupon} holds. 
We need the following inequalities:
\begin{align*}
\Delta_A\min\{1,k_B/k_A\} &=\Delta_A\frac{\lceil\Delta_B/2\rceil+1}{\lceil\Delta_A/2\rceil+1}\le\Delta_A\frac{\Delta_B/2+1.5}{\Delta_A/2}=\Delta_B+3 \text{ and }\\
k_B & \ge (\Delta_B+2)/2\,.
\end{align*}
Using these, we can give a first bound on the left-hand side of~\eqref{itm:coupon},
\begin{align*}
 \text{LHS of~\eqref{itm:coupon}} &\le e(\Delta_A(\Delta_B-1)+1)\left(1-\left(1-\frac{2}{\Delta_B+2}\right)^{\Delta_B+3}\right)^{k_A}\\
 &\le e\Delta_A^2 \left(1-\left(1-\frac{2}{\Delta_B+2}\right)^{\Delta_B+3}\right)^{k_A}\,.
\end{align*}
Computing the derivative of $\Delta_B\mapsto 1-\left(1-\frac{2}{\Delta_B+2}\right)^{\Delta_B+3}$, one finds that this function is decreasing for $\Delta_B \ge 0$ (and this easy exercise is left to the reader). Hence, using that $\Delta_B\ge56$, 
$$1-\left(1-\frac{2}{\Delta_B+2}\right)^{\Delta_B+3}\le1-\left(1-\frac{2}{58}\right)^{59}<0.874\,.$$
We then deduce the following simpler bound on the LHS of~\eqref{itm:coupon}:
\begin{align*}
  \text{LHS of~\eqref{itm:coupon}}
  &< e\Delta_A^2\cdot 0.874^{\lceil\Delta_A/2\rceil+1}
  \le e\Delta_A^2\cdot 0.874^{\Delta_A/2+1}\\
  &\le e\cdot 165^2\cdot 0.874^{83.5}<0.97<1,
 \end{align*}
 where the third inequality uses that by assumption $\Delta_A\ge165$ and that the previous expression is a decreasing function of $\Delta_A$ for $\Delta_A\ge35$ (which is verified by computing the derivative).
 So we can apply Lemma~\ref{mainBip} to reach the conclusion in this case.

Next we show how to derive the conclusion under Condition~\ref{itm:other}.
  The case $\Delta_A=1$ is trivial, so we assume $\Delta_A\ge2$.
In this case, we aim to check that~\eqref{itm:transversals} holds. It suffices to show that the following inequality holds:
  \begin{equation*}
  \Delta_B/2\ge (ek_A\Delta_B)^{1/k_A}\Delta_A\,,
  \end{equation*}
  which is equivalent to
%  $\Delta_B^{1-1/k_A}\ge 2(ek_A)^{1/k_A}\Delta_A$ 
%  and
  \begin{align}\label{interGoal}
    \Delta_B\ge %(ek_A)^{1/(k_A-1)}(2\Delta_A)^{k_A/(k_A-1)}\,.
    \left(ek_A(2\Delta_A)^{k_A}\right)^{1/(k_A-1)}\,.
  \end{align}
  It remains simply to compute this last expression for all $\Delta_A\ge 2$ under Condition~\ref{itm:other}; the results of this computation are provided in Table~\ref{table1}.
  As desired, the right-hand side is always at most $153\le\Delta_B$, establishing~\eqref{itm:transversals}. 
 So again we apply Lemma~\ref{mainBip} to reach the conclusion in this case.
  \end{proof}

%$\lceil(e(\lceil q /2\rceil+1))^{1/(\lceil q/2\rceil)}(2q)^{1+1/(\lceil q/2\rceil)}\rceil$

  \begin{table}
    \centering
    \begin{tabular}{c|lllllllll} 
      \hline
      $\Delta_A$ 
      &2
      &3
      &4
      &5
      &6
      &7
      &8
      &9
      &10
      \\\hline
%      $\sqrt[k_A-1]{ek_A(2\Delta_A)^{k_A}}$
%      $\left\lceil\left(ek_A(2\Delta_A)^{k_A}\right)^{\frac1{k_A-1}}\right\rceil$
%      $\left\lceil(ek_A)^{\frac1{k_A-1}}(2\Delta_A)^{\frac{k_A}{k_A-1}}\right\rceil$
%      $\lceil(ek_A)^{1/(k_A-1)}(2\Delta_A)^{1+1/(k_A-1)}\rceil$
      & 87%123.01539290109174 
      & 42%78.45419966164178 
      & 65%78.45419966164178 
      & 48%70.89269221508955 
      & 61%70.89269221508955 
      & 52%69.91206278436493 
      & 62%69.91206278436493 
      & 57%71.24191667838527 
      & 64%71.24191667838527 
      \\\hline\hline
      $\Delta_A$ 
      &11
      &12
      &13
      &14
      &15
      &16
      &17
      &18
      &19
      \\\hline
%      $\left\lceil\left(ek_A(2\Delta_A)^{k_A}\right)^{\frac1{k_A-1}}\right\rceil$
      & 61
      & 67%73.6422382770565 
      & 65%76.62114713389445
      & 70%76.62114713389445 
      & 69%79.94515056258462 
      & 74%79.94515056258462 
      & 73%83.48953534111958 
      & 78%83.48953534111958 
      & 77
       \\\hline\hline
      $\Delta_A$ 
      &20
      &21
      &22
      &23
      &24
      &25
      &26
      &27
      &28
      \\\hline
%      $\left\lceil\left(ek_A(2\Delta_A)^{k_A}\right)^{\frac1{k_A-1}}\right\rceil$
      & 82
      & 81
      & 86
      & 86
      & 90
      & 90
      & 94
      & 94
      & 98
       \\\hline\hline
      $\Delta_A$      
      & 29
      & 30
      & 31
      & 32
      & 33
      & 34
      & 35
      & 36
      & 37
      \\\hline
%      $\left\lceil\left(ek_A(2\Delta_A)^{k_A}\right)^{\frac1{k_A-1}}\right\rceil$
      & 98
      & 102
      & 102
      & 106
      & 107
      & 110
      & 111
      & 114
      & 115
       \\\hline\hline
      $\Delta_A$      
      & 38
      & 39
      & 40
      & 41
      & 42
      & 43
      & 44
      & 45
      & 46
      \\\hline
%      $\left\lceil\left(ek_A(2\Delta_A)^{k_A}\right)^{\frac1{k_A-1}}\right\rceil$
      & 118
      & 119
      & 122
      & 123
      & 127
      & 128
      & 131
      & 132
      & 135
       \\\hline\hline
      $\Delta_A$      
      & 47
      & 48
      & 49
      & 50
      & 51
      & 52
      & 53
      & 54
      & 55
      \\\hline
%      $\left\lceil\left(ek_A(2\Delta_A)^{k_A}\right)^{\frac1{k_A-1}}\right\rceil$
      & 136
      & 139
      & 140
      & 143
      & 144
      & 147
      & 148
      & 151
      & 153
      \\\hline
    \end{tabular}
    \caption{Evaluations of $\left\lceil\left(ek_A(2\Delta_A)^{k_A}\right)^{1/(k_A-1)}\right\rceil$ for $\Delta_A\in\{2,\dots,55\}$.%, Upper bounds for the right-hand side of~\eqref{interGoal}.
    \label{table1}}
  \end{table}

 % Using a computer and Lemma~\ref{mainBip} with respect to~\eqref{itm:coupon}, one can verify that only $6837$ out of the approximately $40000$ remaining pairs need to be studied.

In this section, we made Theorem~\ref{mainBipConj} a baseline for comparison due to our objective in Conjecture~\ref{conj:exact}. From this viewpoint, the method via Theorem~\ref{thm:zdenekobs} appears most effective. 
However, both the method used for Theorem~\ref{thm:asym} and the one in Section~\ref{sec:main} can be used to establish $(k_A,k_B)$-choosability for $k_A=(1+o(1))\Delta_A/\log \Delta_A$ and $k_B=(1+o(1))\Delta_B/\log \Delta_B$~\cite{ACK21,HuPi21+,HuPi23}, which seems out of reach for the Alon--Tarsi method. Moreover, the method used for Theorem~\ref{thm:asym} depends critically on the bipartition, and, while the method does not obviously transfer to triangle-free graphs, it handles more asymmetric dependencies of $k_A$ and $k_B$ upon $\Delta_A$ and $\Delta_B$, as in~\cite{ACK21}.
Thus each  method has its strength.

\section{Concluding remarks}\label{sec:conclusion}

We have demonstrated how Conjecture~\ref{conj:exact} is temptingly within reach. We encourage further research on this.
We end with yet another related problem.
%We distinguish this one because to us it seems intuitively feasible, yet not directly amenable to the techniques used in this paper. 

\begin{conjecture}\label{conj:C4}
It holds that $\chi(G) \le \lceil (\Delta+1)/2\rceil+1$ for any $C_4$-free graph $G$ of maximum degree $\Delta$.
\end{conjecture}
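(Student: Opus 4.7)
The plan is to mimic the proof strategy of Theorem~\ref{thm:mainlocal} in the $C_4$-free setting. The crucial structural observation is that in any $C_4$-free graph $G$ and for any vertex $v \in V(G)$, the induced subgraph $G[N(v)]$ has maximum degree at most $1$, i.e.\ it is a matching together with isolated vertices; indeed, a path $u_1 u_2 u_3$ inside $N(v)$ together with $v$ would form a copy of $C_4$, namely $v u_1 u_2 u_3 v$.

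Armed with this, I would induct on $|V(G)|$, sample a uniformly random proper $L$-colouring $\mathbf{c}$ of $G - v$, and aim to show $\mathbb{E}[|L_\mathbf{c}(v)|] \ge \ell$ for a suitable $\ell$. The upper bound on $\mathbb{P}(|L_\mathbf{c}(u)| \le t)$ obtained from the inductive hypothesis, and the corresponding bound on $\mathbb{E}[\mathbf{t}_v]$, transfer verbatim, as no triangle-free property is used in their derivation. The divergence from the triangle-free analysis occurs upon conditioning on $\mathbf{c}|_{G_0} = c_0$ and studying $(\mathbf{X}_u)_{u \in N(v)}$: the variables $\mathbf{X}_u$ are now independent \emph{across matching components} of $G[N(v)]$, but each matching edge $u_1 u_2$ contributes a pair $(\mathbf{X}_{u_1}, \mathbf{X}_{u_2})$ distributed uniformly on $L_{c_0}(u_1) \times L_{c_0}(u_2)$ subject to $\mathbf{X}_{u_1} \neq \mathbf{X}_{u_2}$.

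Consequently, for each $j \in L(v) \setminus B$, the factorisation of $\mathbb{P}(\mathbf{Y}_j = 1 \mid c_0)$ gains, per matching edge $u_1 u_2$ with $a_i := |L_{c_0}(u_i)|$ and $c := |L_{c_0}(u_1) \cap L_{c_0}(u_2)|$, an explicit factor obtained by a brief case analysis on whether $j$ lies in $L_{c_0}(u_1) \cap L_{c_0}(u_2)$, in exactly one of the two sets, or in neither. A short computation shows that when both $a_i \ge t+1$ this factor is bounded below by $(1 - 1/a_1)(1 - 1/a_2)$ up to a multiplicative correction of the form $1 - O(c/(a_1 a_2))$, so only lower-order terms deviate from the ``independent'' triangle-free expression. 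Accumulating these corrections across all matching edges, invoking AM--GM over $j$, and applying Jensen's inequality via Lemma~\ref{convexity} as before, one would obtain an analogue of Property~$\mathbf{(P)}$ and hence Conjecture~\ref{conj:C4} for all $\Delta$ above some threshold.

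The main obstacle is quantitative: in a $C_4$-free graph the matching in $G[N(v)]$ may be as large as $\lfloor \deg(v)/2 \rfloor$, so the entire neighbourhood could be paired, a structure absent in the triangle-free setting. Thus the compounded correction factors coming from matching edges cannot be dismissed as negligible, and carefully controlling their aggregate effect on the Property~$\mathbf{(P)}$ inequality is the technical crux; one should expect a minimum degree threshold noticeably larger than $524$, and possibly a re-tuning of the pair $(\ell, t)$ to absorb the slack. A cleaner alternative worth trying is to view each matching edge as a single ``super-vertex'' with sample space $L_{c_0}(u_1) \times L_{c_0}(u_2) \setminus \{(x,x)\}$ and to re-derive the convexity lemma in this bivariate setting directly; this may avoid the per-edge case analysis entirely, at the cost of some technical rewriting.
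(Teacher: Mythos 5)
This statement is a conjecture, not a theorem: the paper offers no proof of it, and explicitly records that the bound is only known to hold for $\Delta$ at least around $60000$ (via~\cite[Thm.~2.10]{HuPi23}). Your proposal, even if every step were carried out, would at best establish the bound for all $\Delta$ above some large threshold --- you say as much yourself --- and so it cannot prove the statement as written. The actual content of Conjecture~\ref{conj:C4} lies in the small-$\Delta$ regime, where the bound is tight (e.g.\ $\Delta=4$ for the line graph of the Petersen graph); a Hurley--Pirot-style counting argument, which needs $k(\delta)-t\delta/\ell$ to beat an exponential loss, has no purchase there. The paper even cautions that the \emph{fractional} analogue of this conjecture may already be nontrivial, whereas the fractional version of the triangle-free statement is known.

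There is also a concrete error in the large-$\Delta$ sketch itself. Your structural observation that $G[N(v)]$ is a matching is correct, and the conditional distribution on a matched pair $u_1u_2$ is as you describe. But the claim that the survival probability of a colour $j$ for such a pair is $(1-1/a_1)(1-1/a_2)$ ``up to a multiplicative correction of the form $1-O(c/(a_1a_2))$'', with only lower-order deviation, fails precisely where it matters: the conditioning only guarantees $|L_{c_0}(u_i)|\ge t+1$, and with $t=1$ a pair with $L_{c_0}(u_1)=L_{c_0}(u_2)=\{j,j'\}$ uses \emph{both} colours with probability $1$, so the survival probability of $j$ is $0$ rather than $\tfrac14(1-o(1))$. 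Since up to $\lfloor\deg(v)/2\rfloor$ such pairs can occur, the product over $j$ and over matching edges can vanish, and the AM--GM and Jensen steps (which are built around a product of independent per-vertex factors and the specific convex function of Lemma~\ref{convexity}) do not survive a ``re-tuning of $(\ell,t)$'' alone --- the exponent in Property~$\mathbf{(P)}$ must be re-derived with an extra degradation per matched pair of order roughly $(1-1/t)$ per common colour, which worsens the threshold rather than leaving it comparable to $524$. In short: the approach is a reasonable research direction for improving the known threshold of $\approx 60000$, but it neither closes the conjecture nor is the quantitative heart of the argument actually supplied.
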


\noindent
This bound is attained for the line graph of any snark of girth at least $5$.
Catlin~\cite{Cat78b} proved an upper bound of $2(\Delta+3)/3$.
An $O(\Delta/\log\Delta)$ bound was observed in~\cite[Cor.~2.4]{AKS99}, which was recently tightened to a $(1+o(1)) \Delta/\log\Delta$ bound in~\cite[Thm.~4]{DKPS20+} (in fact for any forbidden cycle length $\ell = \Delta^{o(1)}$ as $\Delta\to\infty$).
One can derive from~\cite[Thm.~2.10]{HuPi23} that the bound in Conjecture~\ref{conj:C4} holds for all $\Delta$ at least around $60000$.
We remark that while the fractional chromatic number version of  Conjecture~\ref{conj:exact} is known (see~\cite[Sub.~21.3]{MoRe02}), the fractional analogue of Conjecture~\ref{conj:C4} may already be nontrivial.

%As the Chv\'atal graph contains $4$-cycles while the Petersen graph does not, it might well be that $\lceil\Delta/2\rceil+1$ is the `correct' target in Conjecture~\ref{conj:C4}.
%Concretely, is there a $4$-regular $C_4$-free graph of chromatic number $4$?

\paragraph{Acknowledgements.}
In previous versions of this note, we had posed Theorem~\ref{thm:zdenekobs} as a conjecture, and we thank Zden\v{e}k Dvo\v{r}\'{a}k and Ronen Wdowinski for indicating to us its earlier solution.
We are grateful to the anonymous referee for their careful review and helpful comments.
We are grateful to Morteza Hasanvand for pointing out the sharpness in Conjecture~\ref{conj:C4}.

\paragraph{Open access statement.} For the purpose of open access,
a CC BY public copyright license is applied
to any Author Accepted Manuscript (AAM)
arising from this submission.

\bibliographystyle{abbrv}
\bibliography{vizingq}

\end{document}